
\documentclass[12pt,draftcls,onecolumn]{IEEEtran}

\usepackage{graphicx}

%
\ifCLASSINFOpdf
\else
\fi
%
%

\usepackage{amsmath}
\usepackage{amsthm}
\usepackage{amsfonts}
\usepackage{amssymb}
\usepackage{color}

\newtheorem{theorem}{Theorem}
\newtheorem{lemma}[theorem]{Lemma}
\newtheorem{defin}[theorem]{Definition}

\newtheorem{remark}[theorem]{Remark}

\hyphenation{op-tical net-works semi-conduc-tor}

\begin{document}
%
\title{A simple approach to distributed observer design for linear systems}
%
%
%


\author{Weixin~Han,
        Harry~L.~Trentelman,~\IEEEmembership{Fellow,~IEEE,}
        Zhenhua~Wang,
        and~Yi~Shen,~\IEEEmembership{Member,~IEEE}
\thanks{This work was partially supported by China Scholarship Council and National Natural Science Foundation of China (Grant No. 61273162, 61403104).}
\thanks{Weixin Han, Zhenhua Wang and Yi Shen are with the Department of Control Science and Engineering, Harbin Institute of Technology, Harbin,
150001 P. R. China.
        {\tt\small zhenhua.wang@hit.edu.cn}}%
\thanks{Harry L. Trentelman is with the Johann Bernoulli Institute for Mathematics and Computer Science, University of Groningen,
   9700 AK Groningen The Netherlands.
        {\tt\small h.l.trentelman@rug.nl}}%
}

%
%

\markboth{Journal of \LaTeX\ Class Files,~Vol.~14, No.~8, August~2015}%
{Shell \MakeLowercase{\textit{et al.}}: Bare Demo of IEEEtran.cls for IEEE Journals}
%



\maketitle


\begin{abstract}
This note investigates the distributed estimation problem for continuous-time linear time-invariant (LTI) systems observed by a network of observers. Each observer in the network has access to only part of the output of the observed system, and communicates with its neighbors according to a given network graph. In this note we recover the known result that if the observed system is observable and the network graph is a strongly connected digraph, then a distributed observer exists. Moreover, the estimation error can be made to converge to zero at any a priori given decay rate. Our approach leads to a relatively straightforward proof of this result, using the mirror of the balanced graph associated with the original network graph. The numerical design of our distributed observer is reduced to solving linear matrix inequalities (LMI's). Each observer in the network has state dimension equal to that of the observed plant.
\end{abstract}

\begin{IEEEkeywords}
Distributed estimation, linear system observers, LMIs, sensor networks.
\end{IEEEkeywords}

%
\IEEEpeerreviewmaketitle

\section{Introduction}
%
%
%
%
\IEEEPARstart{I}{n} recent years, distributed state estimation has received a lot of attention in response to increasing demand for estimating the state of a dynamic system over spatially deployed multiagents or networked sensors \cite{Kim2016CDC}. 
A linear dynamical system is monitored by a network of sensor nodes. The objective of each node is to asymptotically estimate the state of the dynamical system using its own (limited) measurements and via information exchange with neighbors. This is known as the distributed state estimation problem \cite{Mitra2016CCC}.
The main challenge in the distributed estimation problem comes from the limitation that no single observer can estimate the state of the system only using its local measurement. Thus, classical observer design methods cannot be directly applied to the distributed estimation problem \cite{Park2012ACC}.

A similar distributed estimation problem for linear dynamical systems has been studied with various approaches. These approaches can be classified into two main branches, namely: Kalman-filter based techniques, and observer based techniques. The distributed Kalman-filter based state estimation approach was first proposed in \cite{Olfati2005ECC,Olfati2007CDC}. These methods rely on a two-step strategy: a Kalman filter based state estimate update rule, and a data fusion step based on average-consensus \cite{Khan2011ACC}. In \cite{Khan2010CDC,Khan2014AUT}, the authors propose a single-time-scale strategy and design a scalar-gain estimator. Sufficient conditions for stability of the estimator are given in their work. However, the tight coupling between the network topology and the plant dynamics limits the range of the scalar gain parameter. 

On the other hand, an observer-based approach is studied under the joint observability assumption. In \cite{Park2012ACC,Park2012CDC,Park2017TAC}, a state augmentation observer is constructed to cast the distributed estimation problem as the problem of designing a decentralized stabilizing controller for an LTI plant, using the notion of fixed modes \cite{Anderson1981AUT}. It should be noted that these works only discuss discrete-time systems. In \cite{Wang2017TAC}, a general form of distributed observer with arbitrarily fast convergence rate was proposed. Based on the Kalman observable canonical decomposition, local Luenberger observers at each node are constructed in \cite{Mitra2016CCC,Mitra2016CDC,Mitra2017}. The observer reconstructs a certain portion of the state solely by using its own measurements, and uses consensus dynamics to estimate the unobservable portions of the state at each node. Specifically, in \cite{Kim2016CDC} two observer gains are designed to achieve distributed state estimation, one for local measurements and the other for the information exchange. 


In this note, we study the distributed estimation problem for continuous-time LTI systems observed by a network of Luenberger observers (see Fig. \ref{dobs} for an illustration). Each observer accesses a portion of the output of the observed known LTI system, and communicates with its neighboring observers. The observer at each node is designed to asymptotically estimate the state of the system. Unlike the state augmentation observer approach in \cite{Park2012ACC,Park2012CDC,Park2017TAC} and also the observer construction in \cite{Wang2017TAC}, in this note the local Luenberger observer at each node has the same order as the observed system. We borrow the idea of observability decomposition from \cite{Kim2016CDC}. The estimation error of the $i$-th observer in the observable part is stabilized by output injection, and the estimation in the unobservable part reaches a consensus with the other local observer's estimates. Compared with \cite{Kim2016CDC}, in this note the information among the local observers is exchanged by a strongly connected directed graph. We relax the design constraints and develop a new simple design procedure. The choice of the gain matrices in the distributed observer becomes more flexible. We decouple the topological information of the network from the local gain matrices of the observer by the introduction of an auxiliary undirected graph. This auxiliary graph is obtained by balancing the original graph and then taking the mirror of this balanced graph, see \cite{Olfati2004TAC}. The gain matrices in the observer can be obtained by solving linear matrix inequalities (LMI's), which makes the distributed observer design numerically feasible.

\begin{figure}
  \centering
  \includegraphics[scale=0.45]{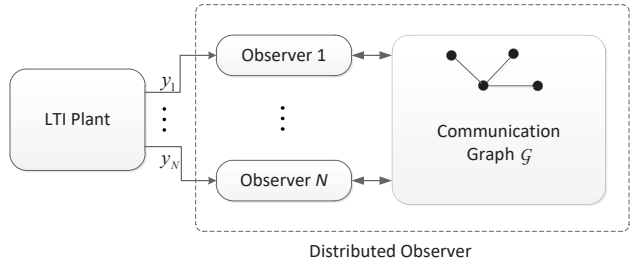}
  \caption{Framework for distributed state estimation}
  \label{dobs}
\end{figure}

%
%

\section{Preliminaries and Problem Formulation}
\subsection{Preliminaries}
\noindent \emph{Notation}:
For a given matrix $M$, its transpose is denoted by $M^T$ and $M^{-1}$ denotes its inverse. The symmetric part of a square real matrix $M$ is sometimes denoted by $\mathrm{Sym}(M):=M+M^T$. The rank of the matrix $M$ is denoted by $\mathrm{rank}~M$. The identity matrix of dimension $N$ will be denoted by $I_N$. The vector $\mathbf{1}_N$ denotes the $N\times 1$ column vector comprising of all ones. For a symmetric matrix $P$, $P>0$ $(P<0)$ means that $P$ is positive (negative) definite. For a set $\{A_1,A_2,\cdots,A_N\}$ of matrices, we use $\mathrm{diag}\{A_1,A_2,\cdots,A_N\}$ to denote the block diagonal matrix with the $A_i$'s along the diagonal, and the matrix $\begin{bmatrix}
A_1^T &A_2^T&\cdots &A_N^T
\end{bmatrix}^T $ is denoted by $\mathrm{col}(A_1,A_2,\cdots,A_N)$. The Kronecker product of the matrices $M_1$ and $M_2$ is denoted by $M_1\otimes M_2$. For a linear map $A:\mathcal{X}\to \mathcal{Y}$, $\mathrm{ker}~A:=\{x\in \mathcal{X}| Ax=0\}$ and $\mathrm{im}~A:=\{Ax| x\in \mathcal{X}\}$ will denote the kernel and image of $A$, respectively. For a real inner product space $\mathcal{X}$, if $\mathcal{V}$ is a subspace of $\mathcal{X}$, then $\mathcal{V}^{\perp}$ will denote the orthogonal complement of $\mathcal{V}$.

In this note, a weighted directed graph is denoted by $\mathcal{G=(N,E,A)}$, where $\mathcal{N} = \{1,2,\cdots,N\}$ is a finite nonempty set of nodes, $\mathcal{E\subset N\times N}$ is an edge set of ordered pairs of nodes, and $\mathcal{A}=[a_{ij}]\in \mathbb{R}^{N\times N}$ denotes the adjacency matrix. The $(j,i)$-th entry $a_{ji}$ is the weight associated with the edge $(i,j)$. We have $a_{ji}\neq 0$ if and only if $(i,j)\in \mathcal{E}$. Otherwise $a_{ji}=0$. An edge $(i,j)\in \mathcal{E}$ designates that the information flows from node $i$ to node $j$. A graph is said to be undirected if it has the property that $(i,j)\in \mathcal{E}$ implies $(j,i)\in \mathcal{E}$ for all $i,j\in \mathcal{N}$. We will assume that the graph is simple, i.e., $a_{ii}=0$ for all $i\in \mathcal{N}$. For an edge $(i,j)$, node $i$ is called the parent node, node $j$ the child node and $j$ is a neighbor of $i$. A directed path from node $i_1$ to $i_l$ is a sequence of edges $(i_k, i_{k+1})$, $k=1,2,\cdots,l-1$ in the graph. A directed graph $\mathcal{G}$ is strongly connected if between any pair of distinct nodes $i$ and $j$ in $\mathcal{G}$, there exists a directed path from $i$ to $j$, $i,j\in \mathcal{N}$.

The Laplacian $\mathcal{L}=[l_{ij}]\in \mathbb{R}^{N\times N}$ of $\mathcal{G}$ is defined as $\mathcal{L:=D-A}$, where the $i$-th diagonal entry of the diagonal matrix $\mathcal{D}$ is given by $d_i=\sum_{j=1}^N a_{ij}$. By construction, $\mathcal{L}$ has a zero eigenvalue with a corresponding eigenvector $\mathbf{1}_N$ (i.e., $\mathcal{L}\mathbf{1}_N=0_N$), and if the graph is strongly connected, all the other eigenvalues lie in the open right-half complex plane. 

For strongly connected graphs $\mathcal{G}$, we review the following lemma.
\begin{lemma}\cite{Olfati2004TAC,Ren2005TAC,Yu2010E3T}
Assume $\mathcal{G}$ is a strongly connected directed graph. Then there exists a unique positive row vector $r=\begin{bmatrix}
r_1,\cdots,r_N
\end{bmatrix} $ such that $r\mathcal{L}=0$ and $r\mathbf{1}_N=N$. Define $R:=\mathrm{diag}\{r_1,\cdots,r_N\}$. Then $\hat{\mathcal{L}}:=R\mathcal{L+L}^TR$ is positive semi-definite, $\mathbf{1}_N^T\hat{\mathcal{L}}=0$ and $\hat{\mathcal{L}}\mathbf{1}_N=0$.
\end{lemma}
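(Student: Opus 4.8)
The plan is to establish the lemma in three stages: first produce the positive normalized left null vector $r$ of $\mathcal{L}$, then verify the two kernel identities, and finally prove $\hat{\mathcal{L}}\ge 0$.

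For the first stage I would use Perron--Frobenius theory. Since $\mathcal{G}$ is strongly connected, its adjacency matrix $\mathcal{A}$, and hence $\mathcal{A}^T$, is irreducible, and for any scalar $c>\max_i d_i$ the matrix $M:=cI_N-\mathcal{L}^T=(cI_N-\mathcal{D})+\mathcal{A}^T$ is entrywise nonnegative and irreducible. Its eigenvalues are exactly the numbers $c-\lambda$ with $\lambda$ in the spectrum of $\mathcal{L}$. As recalled above, $0$ is a simple eigenvalue of $\mathcal{L}$ and every other eigenvalue has strictly positive real part; a short estimate ($|c-\lambda|^2=c^2-2c\,\mathrm{Re}\,\lambda+|\lambda|^2<c^2$ whenever $c>|\lambda|^2/(2\,\mathrm{Re}\,\lambda)$) shows that, once $c$ is chosen large enough, the spectral radius of $M$ equals $c$ and is attained only at $\lambda=0$. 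The Perron--Frobenius theorem then supplies a strictly positive left eigenvector of $\mathcal{L}$ associated with the eigenvalue $0$, unique up to a positive scalar; rescaling it so that $r\mathbf{1}_N=N$ gives the asserted $r$, and uniqueness is inherited from simplicity of the eigenvalue $0$ of $\mathcal{L}$.

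Given $r$ and $R=\mathrm{diag}(r_1,\dots,r_N)$, the remaining claims are short. Since $\mathbf{1}_N^TR=r$ and $R\mathbf{1}_N=r^T$, we get $\mathbf{1}_N^T\hat{\mathcal{L}}=r\mathcal{L}+(\mathcal{L}\mathbf{1}_N)^TR=0$ and $\hat{\mathcal{L}}\mathbf{1}_N=R(\mathcal{L}\mathbf{1}_N)+\mathcal{L}^Tr^T=(r\mathcal{L})^T=0$, using $\mathcal{L}\mathbf{1}_N=0$ and $r\mathcal{L}=0$. For positive semidefiniteness, observe that $\hat{\mathcal{L}}$ is symmetric by construction and that its off-diagonal entries satisfy $(\hat{\mathcal{L}})_{ij}=r_il_{ij}+r_jl_{ji}=-(r_ia_{ij}+r_ja_{ji})\le 0$ for $i\ne j$, while $\hat{\mathcal{L}}\mathbf{1}_N=0$ forces each diagonal entry to equal the sum of the magnitudes of the off-diagonal entries in its row. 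Hence $\hat{\mathcal{L}}$ is a symmetric, weakly diagonally dominant matrix with nonnegative diagonal, so Gershgorin's theorem puts its spectrum in $[0,\infty)$; equivalently, $\hat{\mathcal{L}}$ is recognized as the Laplacian of the undirected weighted graph (the mirror of the $R$-balanced version of $\mathcal{G}$) with edge weights $r_ia_{ij}+r_ja_{ji}$, which is automatically positive semi-definite.

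I expect the only genuine obstacle to be the first stage: arguing that the one-dimensional left kernel of $\mathcal{L}$ is spanned by a \emph{positive} vector, which is where irreducibility (strong connectivity) and the Perron--Frobenius theorem are essential, together with the bookkeeping needed to match the zero eigenvalue of $\mathcal{L}$ with the Perron root of the shifted nonnegative matrix $cI_N-\mathcal{L}^T$. After that, the identities are one-line manipulations and the semidefiniteness is a Gershgorin estimate.
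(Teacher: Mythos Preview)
The paper does not supply its own proof of this lemma; it is stated with citations to \cite{Olfati2004TAC,Ren2005TAC,Yu2010E3T} and used as a preliminary fact. There is therefore nothing in the paper to compare against.

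Your proposal is a correct self-contained argument. The Perron--Frobenius step is sound: the shifted matrix $M=cI_N-\mathcal{L}^T$ is nonnegative and irreducible for $c>\max_i d_i$, and your bound $c>|\lambda|^2/(2\,\mathrm{Re}\,\lambda)$ (taken over the finitely many nonzero eigenvalues of $\mathcal{L}$) correctly forces $c$ to be the spectral radius of $M$, so the Perron eigenvector of $M$ yields the desired positive left null vector of $\mathcal{L}$. The kernel identities are immediate from $r\mathcal{L}=0$ and $\mathcal{L}\mathbf{1}_N=0$, and your Gershgorin/Laplacian argument for $\hat{\mathcal{L}}\ge 0$ is exactly the observation the paper alludes to in the paragraph following the lemma, where it identifies $\hat{\mathcal{L}}$ as the Laplacian of the mirror of the $R$-balanced graph.
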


We note that $R\mathcal{L}$ is the Laplacian of the balanced digraph obtained by adjusting the weights in the original graph. The matrix $\hat{\mathcal{L}}$ is the Laplacian of the undirected graph obtained by taking the union of the edges and their reversed edges in this balanced digraph. This undirected graph is called the mirror of this balanced graph \cite{Olfati2004TAC}.

\subsection{Problem formulation}

In this note, we consider the following continuous-time LTI system
\begin{equation} \label{sys0}
\begin{array}{l}
\dot{x}=Ax\\
y=Hx
\end{array}
\end{equation}
where $x\in \mathbb{R}^n$ is the state and $y\in \mathbb{R}^p$ is the measurement output. We partition the output $y$ as $y=\mathrm{col}(y_1,\cdots,y_N)$, where $y_i\in \mathbb{R}^{p_i}$ and $\sum_{i=1}^Np_i=p$. Accordingly, $H=\mathrm{col}(H_1,\cdots,H_N)$ with $H_i\in \mathbb{R}^{p_i\times n}$. Here, the portion $y_i=H_ix\in \mathbb{R}^{p_i}$ is assumed to be the only information that can be acquired by the node $i$.

In this note, a standing assumption will be that the communication graph is a strongly connected directed graph. We will also assume that the pair $(H,A)$ is observable. However, $(H_i,A)$ is not assumed to be observable or detectable.


We will design a distributed Luenberger observer for the system given by (\ref{sys0}) with the given communication network. The distributed observer will consist of $N$ local observers, and the local observer at node $i$ has the following dynamics
\begin{equation} \label{obsl}
\dot{\hat{x}}_i=A\hat{x}_i+L_i(y_i-H_i\hat{x}_i)+\gamma r_iM_i\sum_{j = 1}^N a_{ij}(\hat{x}_j-\hat{x}_i),~i\in \mathcal{N} 
\end{equation}
where $\hat{x}_i\in \mathbb{R}^n$ is the state of the local observer at node $i$, $a _{ij}$ is the $(i,j)$-th entry of the adjacency matrix $\mathcal{A}$ of the given network, $r_i$ is defined as in Lemma 1, $\gamma \in \mathbb{R}$ is a coupling gain to be designed, and $L_i\in \mathbb{R}^{n\times p_i}$ and $M_i\in \mathbb{R}^{n\times n}$ are gain matrices to be designed.

The objective of distributed state estimation is to design a network of observers that cooperatively estimate the state of the system described by system (\ref{sys0}). We shall borrow the following definition from \cite{Park2017TAC} for our analysis. 
\begin{defin}
A distributed observer achieves \emph{omniscience asymptotically} if for all initial conditions on (\ref{sys0}) and (\ref{obsl}) we have
\begin{equation}\label{esg}
\lim_{t\to \infty}\|\hat{x}_i(t)-x(t)\|=0
\end{equation}
for all $i\in\mathcal{N}$, i.e., the state estimate maintained by each node asymptotically converges to the true state of the plant.
\end{defin}

To analyze and synthesize observer (\ref{obsl}), we define the local estimation error of the $i$-th observer as
\begin{equation}
e_i:=\hat{x}_i-x.
\end{equation}
 
Combining (\ref{sys0}) and (\ref{obsl}) yields the following error equation
\begin{equation}
\dot{e}_i=(A-L_iH_i)e_i+\gamma r_iM_i\sum_{j = 1}^N a_{ij}(e_j-e_i).
\end{equation}
Let $e:=\mathrm{col}(e_1^T,e_2^T,\cdots,e_N^T)$ be the joint vector of errors. Then we have the global error system
\begin{equation}\label{sys_eg}
\dot{e}=\Lambda e-\gamma \overline{M}(R\mathcal{L}\otimes I_n)e,
\end{equation}
where
\begin{equation*}
\Lambda=\mathrm{diag}\{A-L_1H_1,\cdots,A-L_NH_N\},
\end{equation*}
\begin{equation*}
\overline{M}=\mathrm{diag}\{M_1,\cdots,M_N\},
\end{equation*}
and $R$ is as defined in Lemma 1.  

The distributed observer achieves omniscience asymptotically (\ref{esg}) if and only if the global error system (\ref{sys_eg}) is stable.

Since $(H_i,A)$ is not necessarily observable or detectable, $L_i$ cannot be designed using any classical method directly. We use an orthogonal transformation that yields the observability decomposition for the pair $(H_i,A)$. For $i \in \mathcal{N}$, let $T_i$ be an orthogonal matrix, i.e., a square matrix such that $T_iT_i^T=I_n$, such that the matrices $A$ and $H_i$ are transformed by the state space transformation $T_i$ into the form 
\begin{equation} \label{trf}
T_i^TAT_i=\begin{bmatrix}
A_{io} &0\\
A_{ir} &A_{iu}
\end{bmatrix},~H_iT_i=\begin{bmatrix}
H_{io} &0
\end{bmatrix}
\end{equation}
where $A_{io}\in \mathbb{R}^{v_i\times v_i}$, $A_{ir}\in \mathbb{R}^{(n-v_i)\times v_i}$, $A_{iu}\in \mathbb{R}^{(n-v_i)\times (n-v_i)}$, $H_{io}\in \mathbb{R}^{p_i\times v_i}$, and $n-v_i$ is the dimension of the unobservable subspace of the pair $(H_i,A)$. Clearly, by construction, the pair $(H_{io},A_{io})$ is observable.
In addition, if we partition $T_i=\begin{bmatrix}
T_{i1} &T_{i2}
\end{bmatrix} $, where $T_{i1}$ consists of the first $v_i$ columns of $T_i$, then the unobservable subspace is given by $\mathrm{im}~T_{i2}=\mathrm{ker}~O_i$,
where $O_i=\mathrm{col}(H_i,H_iA,\cdots,H_iA^{n-1})$. Note that $\mathrm{im}~T_{i1}=\mathrm{ker}~O_i^{\perp}$.

\section{Main Results}

In this section, we study the distributed observer design. Before presenting the main design procedure, we state the following lemmas, based on Lemma 1. Our first lemma is standard:
\begin{lemma}\cite{Li2014book}
For a strongly connected directed graph $\mathcal{G}$, zero is a simple eigenvalue of $\hat{\mathcal{L}}=R\mathcal{L+L}^TR$ introduced in Lemma 1. Furthermore, its eigenvalues can be ordered as $\lambda _1=0<\lambda _2\leqslant \lambda _3\leqslant \cdots \leqslant \lambda _N $. Furthermore, there exists an orthogonal matrix $U=\begin{bmatrix} \frac{1}{\sqrt{N}}\mathbf{1}_N &U_2 \end{bmatrix} $, where $U_2\in \mathbb{R}^{N \times (N-1)}$, such that $U^T(R\mathcal{L+L}^TR)U=\mathrm{diag}\{0,\lambda _2,\cdots,\lambda _N\}$.
\end{lemma}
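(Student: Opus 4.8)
The plan is to derive everything from the properties of $\hat{\mathcal{L}} = R\mathcal{L} + \mathcal{L}^TR$ already recorded in Lemma 1, supplemented by one connectivity observation. Lemma 1 gives that $\hat{\mathcal{L}}$ is symmetric and positive semi-definite with $\hat{\mathcal{L}}\mathbf{1}_N = 0$. Hence, by the spectral theorem for real symmetric matrices, $\hat{\mathcal{L}}$ has only real, nonnegative eigenvalues, is orthogonally diagonalizable, and $0$ is an eigenvalue with eigenvector $\mathbf{1}_N$. So the only substantive points to establish are (i) that the eigenvalue $0$ is simple, and (ii) the existence of an orthogonal eigenvector matrix whose first column is the normalized vector $\frac{1}{\sqrt{N}}\mathbf{1}_N$.

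For (i), I would first note that $\hat{\mathcal{L}}$ is itself an (undirected) graph Laplacian: for $i\neq j$ its $(i,j)$ entry equals $-(r_i a_{ij} + r_j a_{ji}) \leqslant 0$, since $R>0$ and $a_{ij}, a_{ji}\geqslant 0$, while its row sums vanish because $\hat{\mathcal{L}}\mathbf{1}_N = 0$; this is precisely the Laplacian of the mirror of the balanced digraph associated with $\mathcal{G}$, as explained after Lemma 1. Its underlying undirected graph has the edge $\{i,j\}$ exactly when $a_{ij}>0$ or $a_{ji}>0$, i.e. when $\mathcal{G}$ contains the edge $(i,j)$ or the edge $(j,i)$. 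Since $\mathcal{G}$ is strongly connected, every pair of nodes is joined by a directed path in $\mathcal{G}$, hence by an undirected path in the mirror, so the mirror is connected. Then the standard energy argument applies: if $\hat{\mathcal{L}}x = 0$ then $0 = x^T\hat{\mathcal{L}}x = \sum_{\{i,j\}}(r_i a_{ij} + r_j a_{ji})(x_i - x_j)^2$, the sum running over the edges of the mirror, which forces $x_i = x_j$ along every edge and therefore, by connectedness, $x\in\mathrm{span}\{\mathbf{1}_N\}$. Thus $\dim\ker\hat{\mathcal{L}} = 1$, and the eigenvalues can be ordered as $0 = \lambda_1 < \lambda_2\leqslant\cdots\leqslant\lambda_N$.

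For (ii), once simplicity of $\lambda_1 = 0$ is in hand, I would take the unit eigenvector $\frac{1}{\sqrt{N}}\mathbf{1}_N$ for $\lambda_1$ and extend it, using the spectral theorem, to an orthonormal basis of $\mathbb{R}^N$ consisting of eigenvectors of $\hat{\mathcal{L}}$; collecting the eigenvectors associated with $\lambda_2,\ldots,\lambda_N$ (which necessarily lie in $\mathbf{1}_N^{\perp}$) as the columns of $U_2\in\mathbb{R}^{N\times(N-1)}$ yields an orthogonal matrix $U = \begin{bmatrix} \frac{1}{\sqrt{N}}\mathbf{1}_N & U_2 \end{bmatrix}$ with $U^T\hat{\mathcal{L}}U = \mathrm{diag}\{0,\lambda_2,\ldots,\lambda_N\}$, as claimed.

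I expect no real obstacle here: the result is classical (hence the citation), and the only step that is not immediate from the spectral theorem and the Laplacian energy identity is the connectivity claim in (i) — namely that strong connectedness of the directed graph $\mathcal{G}$ forces the mirror of its balanced version to be a connected undirected graph. That is the point I would spell out most carefully; everything else is routine.
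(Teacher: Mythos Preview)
Your argument is correct and complete. The paper does not actually prove this lemma: it is stated with a citation to \cite{Li2014book} and no proof is given in the text, so there is nothing to compare against beyond noting that your self-contained derivation---identifying $\hat{\mathcal{L}}$ as the Laplacian of the connected undirected mirror graph and then invoking the standard energy identity plus the spectral theorem---is exactly the classical route one would expect and is sound as written.
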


\begin{lemma}
Let $\mathcal{L}$ be the Laplacian matrix associated with the strongly connected directed graph $\mathcal{G}$. For all $g_i>0$, $i\in \mathcal{N}$, there exists $\epsilon>0$ such that
\begin{equation} \label{L1}
T^T((R\mathcal{L+L}^TR)\otimes I_n)T+G>\epsilon I_{nN},
\end{equation}
where $T=\mathrm{diag}\{T_1,\cdots,T_N\}$, $R$ is defined as in Lemma 1, $G=\mathrm{diag}\{G_1,\cdots,G_N\}$, and $G_i=\begin{bmatrix}
 g_i I_{v_i} & 0\\
 0 &0_{n-v_i}
\end{bmatrix}$, $i\in \mathcal{N}$.
\end{lemma}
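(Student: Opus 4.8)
\emph{Proof proposal.} The plan is to prove the stronger, $\epsilon$-free statement that the symmetric matrix $S:=T^T(\hat{\mathcal{L}}\otimes I_n)T+G$ is positive definite, where $\hat{\mathcal{L}}=R\mathcal{L}+\mathcal{L}^TR$; the existence of $\epsilon>0$ with $S>\epsilon I_{nN}$ then follows at once by taking, e.g., $\epsilon=\tfrac12\lambda_{\min}(S)>0$. Both summands of $S$ are positive semi-definite: $G\geq 0$ since each $g_i>0$ and each diagonal block $G_i$ is PSD, and $T^T(\hat{\mathcal{L}}\otimes I_n)T$ is \emph{orthogonally} similar to $\hat{\mathcal{L}}\otimes I_n\geq 0$ because $T=\mathrm{diag}\{T_1,\dots,T_N\}$ is orthogonal (each $T_i$ being orthogonal). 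Consequently $S\geq 0$, and $S$ can fail to be positive definite only if some nonzero $\xi$ lies in the kernel of \emph{both} summands; I would assume such a $\xi$ exists and reach a contradiction with observability of $(H,A)$.

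First I would identify the kernel of $T^T(\hat{\mathcal{L}}\otimes I_n)T$. By Lemma 2, $\hat{\mathcal{L}}$ has a simple zero eigenvalue with eigenvector $\mathbf{1}_N$, so $\ker(\hat{\mathcal{L}}\otimes I_n)=\{\mathbf{1}_N\otimes w\mid w\in\mathbb{R}^n\}$, and since $T$ is invertible, $\ker\big(T^T(\hat{\mathcal{L}}\otimes I_n)T\big)=T^T\ker(\hat{\mathcal{L}}\otimes I_n)=\{\mathrm{col}(T_1^Tw,\dots,T_N^Tw)\mid w\in\mathbb{R}^n\}$. Thus $\xi$ must have the block form $\xi=\mathrm{col}(T_1^Tw,\dots,T_N^Tw)$ for some $w\in\mathbb{R}^n$, and $w\neq 0$ because each $T_i$ is invertible and $\xi\neq 0$. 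On the other hand, $\xi^TS\xi=0$ together with $G\geq 0$ forces $G\xi=0$; using $G=\mathrm{diag}\{G_1,\dots,G_N\}$ with $G_i=\mathrm{diag}\{g_iI_{v_i},0\}$ and $g_i>0$, this says that the first $v_i$ entries of each $T_i^Tw$ vanish, i.e.\ $T_{i1}^Tw=0$, where $T_{i1}$ consists of the first $v_i$ columns of $T_i$. Equivalently $w\perp\mathrm{im}\,T_{i1}=(\ker O_i)^{\perp}$, hence $w\in\ker O_i$ for every $i\in\mathcal{N}$.

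Finally, since $\ker O_i$ is the unobservable subspace of the pair $(H_i,A)$, I would invoke the standard fact that $\bigcap_{i=1}^N\ker O_i$ is precisely the unobservable subspace of the stacked pair $(H,A)$ with $H=\mathrm{col}(H_1,\dots,H_N)$; by the standing observability assumption this intersection is $\{0\}$, so $w=0$, a contradiction. Therefore $S>0$ and the lemma follows.

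The bulk of the argument is routine linear algebra (orthogonal conjugation, Kronecker structure, the kernel of a sum of PSD matrices). The only genuine step is the passage $w\in\bigcap_i\ker O_i \Rightarrow w=0$, which is exactly where the joint-observability hypothesis on $(H,A)$ — as opposed to detectability of the individual pairs $(H_i,A)$ — enters; I expect this, together with keeping the bookkeeping of the transformations $T_i$ and the ordering of coordinates straight, to be the main thing to get right.
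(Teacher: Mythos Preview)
Your argument is correct, and it is considerably more direct than the paper's own proof. The paper first conjugates by the orthogonal matrix $U$ that diagonalizes $\hat{\mathcal{L}}$ (Lemma~3), then passes through a chain of equivalent inequalities via repeated Schur complements until the question reduces to showing that $\begin{bmatrix}T_{11}&\cdots&T_{N1}\end{bmatrix}$ has full row rank; that last step is proved by the identity $\mathrm{im}\,T_{i1}=(\ker O_i)^{\perp}$ together with $\bigcap_i\ker O_i=\{0\}$ from observability of $(H,A)$. You bypass all the Schur-complement bookkeeping by using the standard fact that a sum of two positive semi-definite matrices is positive definite precisely when their kernels intersect trivially, and then compute those kernels directly from the Kronecker structure and the block form of $G$. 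The decisive step---showing that $T_{i1}^Tw=0$ for all $i$ forces $w\in\bigcap_i\ker O_i=\{0\}$---is identical in both proofs, so the observability hypothesis enters at exactly the same point. Your route is shorter and more transparent; the paper's computation does not yield any additional quantitative information (neither proof produces an explicit $\epsilon$), so nothing is lost by your simplification.
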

\begin{proof}
The inequality (\ref{L1}) holds if and only if the following inequality holds.
\begin{equation} \label{L2}
(U^T(R\mathcal{L+L}^TR)U)\otimes I_n+(U^T\otimes I_n)TGT^T(U\otimes I_n)>0,
\end{equation}
where $U$ is as in Lemma 3.

The inequality (\ref{L2}) holds if the following inequality holds.
\begin{equation} \label{L3}
\lambda _2 I_{nN}-(U\otimes I_n)\begin{bmatrix}
\lambda _2 I_n &0\\
0 &0_{n(N-1)}
\end{bmatrix}(U^T\otimes I_n)+TGT^T>0.
\end{equation}
Since $U=\begin{bmatrix}
\frac{1}{\sqrt{N}}\mathbf{1}_N &U_2
\end{bmatrix} $ and $U^T=\begin{bmatrix}
\frac{1}{\sqrt{N}}\mathbf{1}_N^T \\
U_2^T
\end{bmatrix} $, the inequality (\ref{L3}) is equivalent to
\begin{equation} \label{L4}
\lambda _2 I_{nN}-\frac{\lambda _2}{N}(\mathbf{1}_N\otimes I_n)(\mathbf{1}_N^T\otimes I_n)+TGT^T>0.
\end{equation}

By pre- and post- multiplying with $T^T$ and $T$, the inequality (\ref{L4}) is equivalent to
\begin{equation}
\lambda _2 I_{nN}-\frac{\lambda _2}{N}T^T(\mathbf{1}_N\otimes I_n)(\mathbf{1}_N^T\otimes I_n)T+G>0,
\end{equation}
that is 
\begin{equation} \label{L5}
\lambda _2 I_{nN}+G-\frac{\lambda _2}{N}\begin{bmatrix}
T_1 &\cdots &T_N
\end{bmatrix}^T\begin{bmatrix}
T_1 &\cdots &T_N
\end{bmatrix}>0.
\end{equation}
The orthogonal matrix $T_i$ can be partitioned as $T_i=\begin{bmatrix}
T_{i1} &T_{i2}
\end{bmatrix}$, $i\in \mathcal{N}$. It is clear that $T_{i1}T_{i1}^T+T_{i2}T_{i2}^T=I_n$.
By using the Schur complement lemma \cite{Boyd1994}, the inequality (\ref{L5}) is equivalent to
\begin{equation} \label{L51}
\begin{bmatrix}
\Psi _1  &\cdots &0  &T_{1}^T\\
\vdots  &\ddots &\vdots&\vdots\\
0 &\cdots &\Psi _N &T_{N}^T\\
T_{1}  &\cdots  &T_{N}&\frac{N}{\lambda _2} I_n
\end{bmatrix}>0.
\end{equation}
where $\Psi _i=\begin{bmatrix}
(\lambda _2+g_i)I_{v_i} &0 \\
0 &\lambda _2 I_{n-v_i}
\end{bmatrix}$, $i\in \mathcal{N}$.
Again using the Schur complement lemma, this is equivalent with
\begin{equation}\label{L6}
\begin{bmatrix}
\lambda _2 I_{n-v_1} &0 &\cdots&0  &T_{12}^T\\
0 &\Psi _2  &\cdots &0  &T_{2}^T\\
\vdots  &\vdots  &\ddots &\vdots&\vdots\\
0 &0 &\cdots &\Psi _N  &T_{N}^T\\
T_{12} &T_{2} &\cdots  &T_{N} &\frac{N}{\lambda _2} I_n-\frac{1}{\lambda _2 +g_1}T_{11}T_{11}^T
\end{bmatrix}>0.
\end{equation}
By repeatedly using the Schur complement lemma, we finally obtain that inequality (\ref{L5}) holds if and only if 
\begin{equation}\label{L8}
\frac{N}{\lambda _2} I_n-\sum_{i=1}^N \frac{1}{\lambda _2 +g_i}T_{i1}T_{i1}^T-\sum_{i=1}^N\frac{1}{\lambda _2 }T_{i2}T_{i2}^T>0.
\end{equation}

The left-hand side of inequality (\ref{L8}) is equal to
\begin{equation}
\begin{array}{ll}
&\frac{N}{\lambda _2} I_n-\sum_{i=1}^N \frac{1}{\lambda _2 +g_i}T_{i1}T_{i1}^T-\sum_{i=1}^N\frac{1}{\lambda _2 }T_{i2}T_{i2}^T\\
=&\frac{N}{\lambda _2} I_n-\sum_{i=1}^N\frac{1}{\lambda _2 }T_{i2}T_{i2}^T-\sum_{i=1}^N\frac{1}{\lambda _2 }T_{i1}T_{i1}^T\\
&+\sum_{i=1}^N\frac{1}{\lambda _2 }T_{i1}T_{i1}^T-\sum_{i=1}^N \frac{1}{\lambda _2 +g_i}T_{i1}T_{i1}^T\\
=&\sum_{i=1}^N (\frac{1}{\lambda _2}-\frac{1}{\lambda _2 +g_i})T_{i1}T_{i1}^T\\
\geqslant &\sum_{i=1}^N (\frac{1}{\lambda _2}-\frac{1}{\lambda _2 +g_{min}})T_{i1}T_{i1}^T\\
=&(\frac{N}{\lambda _2}-\frac{N}{\lambda _2 +g_{min}})\begin{bmatrix}
T_{11} &\cdots & T_{N1}
\end{bmatrix} \begin{bmatrix}
T_{11} &\cdots & T_{N1}
\end{bmatrix}^T,
\end{array}
\end{equation}
where $g_{min}$ is the minimum value of $g_i$, $i\in \mathcal{N}$. Obviously, we have $(\frac{N}{\lambda _2}-\frac{N }{\lambda _2 +g_{min}})>0 $ since $g_{min}>0$.

We will now prove that $\mathrm{rank}\begin{bmatrix}
T_{11} & T_{21} &\cdots & T_{N1}
\end{bmatrix}=n$, so that it has full row rank.

Indeed, for $T_{i1}$, we have
\begin{equation} \label{L9}
\mathrm{im}~T_{i1}=\mathrm{ker}~O_i^{\perp},
\end{equation}
where $O_i=\mathrm{col}(H_i,H_iA,\cdots,H_iA^{n-1})$.

Hence,
\begin{equation}
\begin{array}{ll}
&\mathrm{im}\begin{bmatrix}
T_{11} & T_{21} &\cdots & T_{N1}
\end{bmatrix}^{\perp}\\
=&(\mathrm{im}~T_{11}+ \mathrm{im}~T_{21}+\cdots \mathrm{im}~T_{N1})^{\perp}\\
=&\bigcap_{i=1}^N\mathrm{im}~T_{i1}^{\perp}\\
=&\bigcap_{i=1}^N \mathrm{ker}~O_i\\
=&\mathrm{ker}\begin{bmatrix}
O_1\\
\vdots\\
O_N
\end{bmatrix}\\
=&0,
\end{array}
\end{equation}
where we have used our standing assumption that the pair $(H,A)$ is observable.
This implies
\begin{equation}
\mathrm{rank}\begin{bmatrix}
T_{11} & T_{21} &\cdots & T_{N1}
\end{bmatrix}=n.
\end{equation}

Consequently, $\begin{bmatrix}
T_{11} & T_{21} &\cdots & T_{n1}
\end{bmatrix}$ has full row rank $n$, so we obtain:
\begin{equation}
(\frac{N}{\lambda _2}-\frac{N}{\lambda _2 +g_{min}})\begin{bmatrix}
T_{11} &\cdots & T_{N1}
\end{bmatrix} \begin{bmatrix}
T_{11} &\cdots & T_{N1}
\end{bmatrix}^T>0.
\end{equation}

We conclude that the left-hand side of (\ref{L1}) is positive definite, and consequently, for any choice of $g_i>0$, $i\in \mathcal{N}$, there exists a scalar $\epsilon>0$ such that inequality (\ref{L1}) holds.

\end{proof}

The following lemma now deals with the existence of a distributed observer of the form (\ref{obsl}) that achieves omniscience with an a priori given error decay rate. A condition for its existence is expressed in terms of solvability of an LMI. Solutions to the LMI yield required gain matrices. Let $r_i>0$, $i\in \mathcal{N}$, be as in Lemma 1. Let $g_i>0$, $i\in \mathcal{N}$, and $\epsilon >0$ be such that (\ref{L1}) holds. Let $\gamma \in \mathbb{R}$. Finally, let $\alpha>0$ be the desired error decay rate. We have the following:
\begin{lemma}
There exist gain matrices $L_i$ and $M_i$, $i\in \mathcal{N}$, such that the distributed observer (\ref{obsl}) achieves omniscience asymptotically (\ref{esg}) and all solutions of the error system (\ref{sys_eg}) converge to zero with decay rate at least $\alpha$ if there exist positive definite matrices $P_{io}\in \mathbb{R}^{v_i\times v_i},P _{iu}\in \mathbb{R}^{(n-v_i)\times (n-vi)}$, and a matrix $W_i\in \mathbb{R}^{v_i\times p_i}$ such that
\begin{equation} \label{Th1}
\begin{bmatrix}
\Phi _i+ \gamma  g_i I_{v_i}  &A_{ir}^TP_{iu}\\
 P_{iu}A_{ir} &\mathrm{Sym}(P_{iu}A_{iu})+2\alpha P_{iu}
\end{bmatrix}-\gamma \epsilon I_n<0,~\forall i\in \mathcal{N},
\end{equation}
where $\Phi _i:= P_{io}A_{io}+A_{io}^TP_{io}-W_iH_{io}-H_{io}^TW_i^T+2\alpha P_{io}$. In that case, the gain matrices in the distributed observer (\ref{obsl}) can be taken as 
\begin{equation} \label{tlm}
L_i:=T_i\begin{bmatrix}
L_{io}\\
0
\end{bmatrix},~ M_i:=T_i\begin{bmatrix}
P_{io}^{-1} &0\\
0 &P_{iu}^{-1}
\end{bmatrix}T_i^T,
\end{equation}
where $L_{io}=P_{io}^{-1}W_i$, $i\in \mathcal{N}$.
\end{lemma}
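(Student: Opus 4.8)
The plan is to certify exponential stability of the global error system (\ref{sys_eg}) with a single quadratic Lyapunov function assembled from the LMI data. For $i\in\mathcal{N}$ put $P_i:=T_i\,\mathrm{diag}\{P_{io},P_{iu}\}\,T_i^T$; since $T_i$ is orthogonal and $P_{io},P_{iu}>0$, each $P_i>0$, and comparing with (\ref{tlm}) one sees $P_i=M_i^{-1}$. Set $\bar P:=\mathrm{diag}\{P_1,\dots,P_N\}>0$ and $V(e):=e^T\bar P e$. Everything then reduces to showing that the LMIs (\ref{Th1}) force
\[
\bar P\Lambda+\Lambda^T\bar P+2\alpha\bar P-\gamma(\hat{\mathcal L}\otimes I_n)<0,
\]
because this is exactly $\dot V\leq-2\alpha V$ along (\ref{sys_eg}); then $V(e(t))\leq e^{-2\alpha t}V(e(0))$ and $\bar P>0$ give $\|e_i(t)\|\leq\|e(t)\|\to0$ with decay rate at least $\alpha$ for every initial condition, i.e. omniscience asymptotically.

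Next I would differentiate $V$ along (\ref{sys_eg}). The crucial cancellation is $\bar P\,\overline M=I_{nN}$ (indeed $P_iM_i=I_n$), so the coupling term becomes $-2\gamma\,e^T\bar P\,\overline M(R\mathcal L\otimes I_n)e=-2\gamma\,e^T(R\mathcal L\otimes I_n)e$, and since this scalar equals its own transpose it equals $-\gamma\,e^T\big((R\mathcal L+\mathcal L^TR)\otimes I_n\big)e=-\gamma\,e^T(\hat{\mathcal L}\otimes I_n)e$. Hence $\dot V=e^T(\bar P\Lambda+\Lambda^T\bar P)e-\gamma\,e^T(\hat{\mathcal L}\otimes I_n)e$, and the desired estimate is precisely the displayed matrix inequality. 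This is the point at which the mirror/balancing machinery of Lemmas 1 and 4 does the real work: it is what lets the single coupling matrix $\hat{\mathcal L}$ be traded for the decoupled data $\epsilon$ and $G$.

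Then I would apply the orthogonal congruence by $T=\mathrm{diag}\{T_1,\dots,T_N\}$. Using $TT^T=I$ one gets $T^T\bar P T=\mathrm{diag}\{P_{1o},P_{1u},\dots,P_{No},P_{Nu}\}$, while (\ref{trf}) together with $T_i^TL_i=\mathrm{col}(L_{io},0)$ gives $T_i^T(A-L_iH_i)T_i=\begin{bmatrix}A_{io}-L_{io}H_{io}&0\\A_{ir}&A_{iu}\end{bmatrix}$. Substituting $L_{io}=P_{io}^{-1}W_i$ (so that $P_{io}L_{io}=W_i$) turns $T^T(\bar P\Lambda+\Lambda^T\bar P+2\alpha\bar P)T$ into a block-diagonal matrix whose $i$-th block is $\begin{bmatrix}\Phi_i&A_{ir}^TP_{iu}\\P_{iu}A_{ir}&\mathrm{Sym}(P_{iu}A_{iu})+2\alpha P_{iu}\end{bmatrix}$. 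For the coupling term, Lemma 4 gives $T^T(\hat{\mathcal L}\otimes I_n)T>\epsilon I_{nN}-G$, hence — using $\gamma>0$, which is what makes the reduction correct — $-\gamma\,T^T(\hat{\mathcal L}\otimes I_n)T<\gamma G-\gamma\epsilon I_{nN}$, a block-diagonal matrix with $i$-th block $\mathrm{diag}\{\gamma g_iI_{v_i},0\}-\gamma\epsilon I_n$. Adding the two block-diagonal bounds, $T^T\big(\bar P\Lambda+\Lambda^T\bar P+2\alpha\bar P-\gamma(\hat{\mathcal L}\otimes I_n)\big)T$ is strictly dominated by the block-diagonal matrix whose $i$-th block is exactly the left-hand side of (\ref{Th1}); since each block is negative definite by hypothesis and $T$ is nonsingular, the matrix inequality of the first paragraph follows, which finishes the proof.

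The only steps I expect to need care are the bookkeeping ones: tracking the $2\times2$ block structure consistently through the orthogonal transformations $T_i$ — in particular that the output injection $L_i$ acts only on the observable block because $T_i^TL_i=\mathrm{col}(L_{io},0)$ — and the sign handling when Lemma 4 is invoked, which tacitly requires the coupling gain $\gamma$ to be positive (for $\gamma\le0$ the inequality (\ref{Th1}) would not be the correct reduction). Once these are in place the remainder is routine matrix algebra and the standard comparison argument for $V$.
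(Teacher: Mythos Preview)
Your proof is correct and follows essentially the same route as the paper: the same block-diagonal Lyapunov function $V=e^T\bar P e$ with $P_i=T_i\,\mathrm{diag}\{P_{io},P_{iu}\}T_i^T=M_i^{-1}$, the same cancellation $\bar P\,\overline M=I$ reducing the coupling term to $-\gamma(\hat{\mathcal L}\otimes I_n)$, and the same combination of (\ref{Th1}) with Lemma~4 after congruence by $T$ to obtain $\dot V<-2\alpha V$. Your explicit remark that the step invoking Lemma~4 tacitly needs $\gamma>0$ is a useful observation that the paper's proof leaves implicit.
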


\begin{proof}

Choose a candidate Lyapunov function for the error system (\ref{sys_eg})
\begin{equation}
V(e_1,\cdots,e_N):=\sum_{i=1}^Ne_i^TP_ie_i,
\end{equation}
where $P_i:=T_i\begin{bmatrix}
P_{io} &0\\
0 & P_{iu}
\end{bmatrix}T_i^T $. Clearly then $P_i>0$.

The time-derivative of $V$ is
\begin{equation}
\dot{V}(e)=e^T(P\Lambda+\Lambda ^TP-\gamma P\overline{M}(R\mathcal{L}\otimes I_n)-\gamma(\mathcal{L}^TR\otimes I_n)\overline{M}^TP)e
\end{equation}
where $P=\mathrm{diag}\{P_1,\cdots,P_N\}$. Since the matrix $M_i$ in (\ref{tlm}) is chosen as $M_i=P_i^{-1}$, we have $\overline{M} = P^{-1}$. Hence, the time-derivative of $V$ becomes
\begin{equation}
\dot{V}(e)=e^T(P\Lambda+\Lambda ^TP-\gamma (R\mathcal{L}+\mathcal{L}^TR)\otimes I_n)e.
\end{equation}

On the other hand, we get the following inequality by (\ref{Th1}) and (\ref{L1}) in Lemma 4.
\begin{equation}\label{Th1q}
\mathrm{diag}\{Q_1,\cdots,Q_N\}-T^T\gamma((R\mathcal{L}+\mathcal{L}^TR)\otimes I_n)T<0,
\end{equation}
where $Q_i=\begin{bmatrix}
\Phi _i &A_{ir}^TP_{iu}\\
P_{iu}A_{ir} & P_{iu}A_{iu}+A_{iu}^TP_{iu}+2\alpha P_{iu}
\end{bmatrix}$, $i\in \mathcal{N}$, with $\Phi _i$ as defined in the statement of the lemma.

Since $L_{io}=P_{io}^{-1}W_i$, pre- and post- multiplying the inequality (\ref{Th1q}) with $T$ and its transpose, we get
\begin{equation}
P\Lambda+\Lambda ^TP-\gamma(R\mathcal{L}+\mathcal{L}^TR)\otimes I_n+2\alpha P<0,
\end{equation}
which implies $\dot{V}(e)<-2 \alpha V(e)$. Hence the solutions of the error system (\ref{sys_eg}) converge to zero asymptotically with decay rate at least $\alpha$ \cite{Tanaka1998TFS}, and the distributed observer (\ref{obsl}) achieves omniscience asymptotically.

\end{proof}



Using the previous lemmas we now arrive at our main result:
\begin{theorem}
Let $\alpha>0$. If $(H,A)$ is observable and $\mathcal{G}$ is a strongly connected directed graph, then there exists a distributed observer (\ref{obsl}) that achieves omniscience asymptotically while all solutions of the error system converge to zero with decay rate at least $\alpha$. Such observer is obtained as follows:
\begin{itemize}
\item[1] For each $i\in \mathcal{N}$, choose an orthogonal matrix $T_i$ such that
\begin{equation} 
T_i^TAT_i=\begin{bmatrix}
A_{io} &0\\
A_{ir} &A_{iu}
\end{bmatrix},~ H_iT_i=\begin{bmatrix}
H_{io} &0
\end{bmatrix}
\end{equation}
with $(H_{io},A_{io})$ observable.
\item[2] Compute the positive row vector $r=\begin{bmatrix}
r_1,\cdots,r_N
\end{bmatrix} $ such that $r\mathcal{L}=0$ and $r\mathbf{1}_N=N$.
\item[3] Put $g_i=1$, $i\in \mathcal{N}$ and take $\epsilon >0$ such that (\ref{L1}) holds.
\item[4] Take $\gamma>0$ sufficiently large so that for all $i\in \mathcal{N}$ 
\begin{equation}\label{Thg1}
A_{iu}+A_{iu}^T-(\gamma \epsilon-2\alpha) I_{n-v_i}+\frac{1}{\gamma \epsilon-2\alpha}A_{ir}A_{ir}^T<0.
\end{equation}
\item[5] Choose $L_{io}$ such that all eigenvalues of $A_{io}-L_{io}H_{io}$ lie in the region $\{s\in \mathbb{C}~ |~ \mathrm{Re}(s)<-\alpha\}$. 
\item[6] For all $i\in \mathcal{N}$, solve the Lyapunov equation 
\begin{equation}\label{Th2}
\mathrm{Sym}(P_{io}(A_{io}-L_{io}H_{io}+\alpha I _{v_i}))+(\gamma-2\alpha)  I_{v_i}=0
\end{equation}
to obtain $P_{io}>0$.
\item[7] Define  
\begin{equation} \label{tlm2}
L_i:=T_i\begin{bmatrix}
L_{io}\\
0
\end{bmatrix}, M_i:=T_i\begin{bmatrix}
P_{io}^{-1} &0\\
0 &I_{n-v_i}
\end{bmatrix}T_i^T,i\in \mathcal{N}
\end{equation}
\end{itemize}
\end{theorem}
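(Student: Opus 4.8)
The plan is to show that the seven-step recipe produces exactly the data required to invoke Lemma~5, so that the theorem follows directly from Lemma~5. Steps~1--3 are available verbatim from the earlier development: step~1 is the orthogonal observability decomposition (\ref{trf}), step~2 is the existence statement of Lemma~1, and step~3 is Lemma~4 applied with $g_i=1$, which produces a \emph{fixed} $\epsilon>0$ for which (\ref{L1}) holds. It then remains to exhibit, for a suitable $\gamma$, positive definite matrices $P_{io},P_{iu}$ and matrices $W_i$ satisfying the LMI (\ref{Th1}), and to check that the gains (\ref{tlm2}) coincide with (\ref{tlm}). The latter is immediate once we take $P_{iu}:=I_{n-v_i}$ and $W_i:=P_{io}L_{io}$, so that $L_{io}=P_{io}^{-1}W_i$ as in Lemma~5.

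First I would settle solvability of step~6. Since $(H_{io},A_{io})$ is observable, the gain $L_{io}$ in step~5 can indeed be chosen so that $\sigma(A_{io}-L_{io}H_{io})\subset\{\,\mathrm{Re}(s)<-\alpha\,\}$, i.e.\ $A_{io}-L_{io}H_{io}+\alpha I_{v_i}$ is Hurwitz. Provided $\gamma>2\alpha$, the right-hand side $(\gamma-2\alpha)I_{v_i}$ of (\ref{Th2}) is positive definite, so the Lyapunov equation (\ref{Th2}) has a unique solution $P_{io}$, and that solution is positive definite. I record for later use that (\ref{Th2}) says precisely $\mathrm{Sym}\big(P_{io}(A_{io}-L_{io}H_{io})\big)+2\alpha P_{io}=-(\gamma-2\alpha)I_{v_i}$, i.e.\ $\Phi_i=-(\gamma-2\alpha)I_{v_i}$ with $\Phi_i$ as defined in Lemma~5.

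Next I would substitute these choices into (\ref{Th1}). With $g_i=1$, $P_{iu}=I_{n-v_i}$ and $\Phi_i=-(\gamma-2\alpha)I_{v_i}$, the left-hand side of (\ref{Th1}) becomes
\begin{equation*}
\begin{bmatrix}
(2\alpha-\gamma\epsilon)I_{v_i} & A_{ir}^T\\
A_{ir} & A_{iu}+A_{iu}^T+(2\alpha-\gamma\epsilon)I_{n-v_i}
\end{bmatrix}.
\end{equation*}
If $\gamma\epsilon>2\alpha$ the $(1,1)$ block is negative definite, so by the Schur complement lemma this matrix is negative definite if and only if its Schur complement with respect to the $(1,1)$ block is negative definite; a short computation shows that this Schur complement is exactly the left-hand side of (\ref{Thg1}). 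Hence, once $\gamma$ is chosen with $\gamma>2\alpha$, $\gamma\epsilon>2\alpha$ and (\ref{Thg1}) satisfied, all hypotheses of Lemma~5 hold with the above $P_{io},P_{iu},W_i$, and the conclusion of the theorem follows.

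Finally I would check that step~4 is feasible, i.e.\ that a single $\gamma$ meeting all three requirements exists. Since $\epsilon$ is now frozen, $\gamma\epsilon-2\alpha\to\infty$ as $\gamma\to\infty$; in (\ref{Thg1}) the term $-(\gamma\epsilon-2\alpha)I_{n-v_i}$ then dominates the bounded matrix $A_{iu}+A_{iu}^T$ while $\tfrac{1}{\gamma\epsilon-2\alpha}A_{ir}A_{ir}^T\to0$, so (\ref{Thg1}) holds for all sufficiently large $\gamma$; the scalar inequalities $\gamma>2\alpha$ and $\gamma\epsilon>2\alpha$ hold for large $\gamma$ as well, and since $\mathcal N$ is finite one $\gamma$ serves every $i\in\mathcal N$. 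The only real subtlety — and the point I would be most careful about — is the order of the quantifiers: $\epsilon$ must be fixed first (Lemma~4 with $g_i=1$) and $\gamma$ chosen large \emph{relative to} $\epsilon$, $\alpha$ and the norms of $A_{iu},A_{ir}$; beyond that, the argument is just the bookkeeping of two Schur-complement reductions feeding into Lemma~5.
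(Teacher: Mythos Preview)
Your proposal is correct and follows essentially the same route as the paper: fix $g_i=1$ and $P_{iu}=I_{n-v_i}$, use the Lyapunov equation (\ref{Th2}) to force $\Phi_i=-(\gamma-2\alpha)I_{v_i}$, and then reduce (\ref{Th1}) via a Schur complement to exactly (\ref{Thg1}), which holds for $\gamma$ large. Your write-up is in fact slightly more careful than the paper's in that you make explicit the auxiliary constraint $\gamma>2\alpha$ needed for (\ref{Th2}) to yield $P_{io}>0$, and you spell out the quantifier order ($\epsilon$ fixed first, then $\gamma$ chosen large relative to it).
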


\begin{proof}
We choose $g_i=1$, $i\in \mathcal{N}$. Since the pair $(H,A)$ is observable and the graph $\mathcal{G}$ is a strongly connected directed graph, $\epsilon>0$ can be obtained by Lemma 4. 

Putting $P_{iu}=I_{n-v_i}$, $i\in \mathcal{N}$, the inequality (\ref{Th1}) in Lemma 5 becomes
\begin{equation}\label{Thg}
\begin{bmatrix}
\Phi _i+ \gamma  I_{v_i} &A_{ir}^T\\
A_{ir} &A_{iu}+A_{iu}^T+2\alpha I_{n-v_i}
\end{bmatrix}-\gamma \epsilon I_n<0,~ \forall i\in \mathcal{N}.
\end{equation}
where $\Phi _i= P_{io}A_{io}+A_{io}^TP_{io}-W_iH_{io}-H_{io}^TW_i^T+2\alpha P_{io}$.
By substituting (\ref{Th2}) and $W_i=P_{io}L_{io}$ into (\ref{Thg}), we have that the inequality (\ref{Thg}) holds if
\begin{equation}\label{Th3}
\begin{bmatrix}
-(\gamma \epsilon-2\alpha) I_{v_i} &A_{ir}^T\\
A_{ir} &\mathrm{Sym}(A_{iu})-(\gamma \epsilon -2\alpha) I_{n-v_i}
\end{bmatrix}<0,\forall i\in \mathcal{N}.
\end{equation}
By using the Schur complement lemma, (\ref{Th3}) is equivalent with
\begin{equation} \label{Th4}
A_{iu}+A_{iu}^T-(\gamma \epsilon -2\alpha) I_{n-v_i}+\frac{1}{\gamma \epsilon -2\alpha}A_{ir}A_{ir}^T<0,~ \forall i\in \mathcal{N}.
\end{equation}
As stated in step 4, inequality (\ref{Th4}) holds with sufficiently large $\gamma >0$.

Thus, we find that the parameters introduced in steps 3 to 6 guarantee that the inequality (\ref{Th1}) in Lemma 5 holds. Hence, the distributed observer (\ref{obsl}) with gain matrices $L_i$ and $M_i$ achieves omniscience asymptotically with decay rate at least $\alpha$.

\end{proof}

\begin{remark}
\rm{Since $(H_{io},A_{io})$ is observable, the Lyapunov equation (\ref{Th2}) in step 6 can indeed be made to have a solution for any $\alpha>0$. The coupling gain $\gamma >0$ can be taken sufficiently large such that (\ref{Thg1}) holds for any given $\alpha>0$, which means that the error can be made to converge to zero at any desired rate.}
\end{remark}

\begin{remark}
\rm{The design procedure in Theorem 6 gives one possible chioce of solutions of the inequality (\ref{Th1}) in Lemma 5, which also means that the inequality (\ref{Th1}) always has the required solutions under our standing assumptions that $(H,A)$ is observable and the graph $\mathcal{G}$ is strongly connected. In fact, inequalities (\ref{L1}) in Lemma 4 and (\ref{Th1}) in Lemma 5 both are LMI's, which can be solved numerically by using the LMI Toolbox or YALMIP in MATLAB directly.}
\end{remark}

\begin{remark}
\rm{In the special case that the communication graph among the observers is a connected undirected graph, we have that $r=\mathbf{1}_N^T$ is the unique positive row vector such that $r\mathcal{L}=0$ and $r\mathbf{1}_N=N$. In the design procedure of Theorem 6, we can then take $r_i=1$ for all $i \in \mathcal{N}$.}
\end{remark}

\section{Conclusions}
In this note, we have proposed a novel simple approach to distributed observer design for LTI systems. The information among the local observers is exchanged by a strongly connected directed graph. The local Luenberger observer at each node is designed to asymptotically estimate the state of the dynamical system. Each local observer has state dimension equal to that of the 
observed plant. We have analyzed the structure of the required gain matrices using an observability decomposition of the local systems. By introducing an auxiliary undirected graph, a bank of LMI's is presented to calculate the gain matrices in our distributed observer. Finally, we have recovered the known sufficient conditions for the existence of a distributed observer and have presented a simple optional algorithm to design a distributed observer.


%

\appendices
%



\ifCLASSOPTIONcaptionsoff
  \newpage
\fi



\bibliographystyle{IEEEtran}
\bibliography{IEEEabrv,mybibfiles}
\end{document}